\newtheorem{theorem}{Theorem}[section]
\newtheorem{conjecture}[theorem]{Conjecture}
\newtheorem{corollary}[theorem]{Corollary}
\newtheorem{lemma}[theorem]{Lemma}
\newenvironment{proof}[1][Proof]{\noindent\textbf{#1.} }
{\hfill \ \rule{0.5em}{0.5em}}
\newcommand{\integers}{\mathbb{Z}}
\begin{document}

\title{$k$-fold Sidon sets}

\author{Javier Cilleruelo\thanks{
Departamento de Matem\'{a}ticas, Universidad Aut\'{o}noma de Madrid, 28049 Madrid.}
\and
Craig Timmons\thanks{Department of Mathematics, University of California San Diego, La Jolla, CA 92093.
Partially supported by NSF Grant DMS-1101489 through Jacques Verstra\" ete.}}

\maketitle
\parskip 1.5mm
\begin{abstract}
Let $k \geq 1$ be an integer.  A set $A \subset \mathbb{Z}$ is a \emph{$k$-fold Sidon set} if $A$ has only trivial solutions to each equation of the form $c_1 x_1 + c_2 x_2 + c_3 x_3 + c_4 x_4 = 0$ where
$0 \leq |c_i | \leq k$, and $c_1 + c_2 + c_3 + c_4 = 0$.  We prove that for any integer $k \geq 1$, a $k$-fold Sidon
set $A \subset [N]$ has at most $(N/k)^{1/2} + O((Nk)^{1/4})$ elements. Indeed we prove that given any $k$ positive integers $c_1<\cdots <c_k$, any set $A\subset [N]$ that contains only trivial solutions
to $c_i(x_1-x_2)=c_j(x_3-x_4)$ for each $1 \le i \le j \le k$, has at most $(N/k)^{1/2}+O((c_k^2N/k)^{1/4})$ elements. On the other hand, for any $k \geq 2$ we can exhibit $k$ positive integers $c_1,\dots, c_k$ and a set $A\subset [N]$ with
$|A|\ge (\frac 1k+o(1))N^{1/2}$, such that $A$ has only trivial solutions to $c_i(x_1 - x_2) = c_j (x_3 -  x_4)$
for each $1 \le i \le j\le k$.
\end{abstract}


\section{Introduction}

Let $\Gamma$ be an abelian group.  A set $A \subset \Gamma$ is a \emph{Sidon} set if $a + b = c + d$ and $a,b,c,d \in A$ implies $\{a,b \} = \{c,d \}$.  Sidon sets in $\integers$ and in the group $\integers_N:= \integers / N \integers$ have been studied extensively.  Erd\H{o}s and Tur\'{a}n \cite{ET} proved that a Sidon set $A \subset [N]$ has at most $N^{1/2} + O(N^{1/4})$ elements.  Constructions of
Singer \cite{S}, Bose and Chowla \cite{BC}, and Ruzsa \cite{R} show that this upper bound is asymptotically best possible.  It is a prize problem of Erd\H{o}s \cite{E} to determine whether or not the error term is bounded.  For more on Sidon sets we recommend O'Bryant's survey \cite{OB}.

Let
\begin{equation}\label{r eq}
c_1 x_1 + \dots + c_r x_r = 0
\end{equation}
be an integer equation where $c_i \in \integers \backslash \{ 0 \}$, and $c_1 + \dots + c_r = 0$.
Call such an equation an \emph{invariant equation}.
A solution $(x_1 , \dots , x_r ) \in \integers^r$ to (\ref{r eq}) is \emph{trivial} if there is a partition of
$\{1, \dots , r \}$ into nonempty sets $T_1 , \dots , T_m$ such that for every $1 \leq i \leq m$, we have
$\sum_{j \in T_i} c_j = 0$, and $x_{j_1} = x_{j_2}$ whenever $j_1 , j_2 \in T_i$.  A natural extremal problem is to determine the maximum size of a set $A \subset [N]$ with only trivial solutions to (\ref{r eq}).
This problem was investigated in detail by Ruzsa \cite{R}.
One of the important open problems from \cite{R} is the genus problem.
Given an invariant equation $E : c_1 x_1+ \dots + c_r x_r = 0$, the \emph{genus} $g(E)$ is the largest
integer $m$ such that there is a partition of $\{1, \dots , r \}$ into nonempty sets $T_1 , \dots , T_m$, such that $\sum_{j \in T_i} c_j = 0$ for $1 \leq i \leq m$.  Ruzsa proved that if $E$ is an invariant equation
and $A \subset [N]$ has only trivial solutions to $E$, then $|A| \leq c_E N^{1 / g(E)}$.  Here $c_E$ is a positive constant
depending only on the equation $E$.
Determining if there are sets $A \subset [N]$ with $|A| = N^{1 / g(E) - o(1)}$ and having only trivial solutions to $E$ is open for most equations.  In particular, the genus problem is open for the equation $2x_1 + 2x_2 = 3x_3 + x_4$.  This equation has genus 1 but the best known construction \cite{R} gives a set $A \subset [N]$ with $|A| \geq c N^{1/2}$ where $c>0$ is a positive constant.
More generally, Ruzsa showed that for any four variable equation $E:c_1 x_1 + c_2 x_2 = c_3 x_3  + c_4 x_4$ with
$c_1 + c_2  = c_3 + c_4$ and $c_i \in \mathbb{N}$, there is a set $A \subset [N]$ with only trivial solutions
to $E$ and $|A| \geq c_E N^{1/2 - o(1)}$.  In this paper we consider special types of four variable invariant equations.

Let $k \geq 1$ be an integer.
A set $A \subset \integers$ is a \emph{$k$-fold Sidon} set if $A$ has only trivial solutions to each equation of the form
\begin{equation*}
c_1 x_1 + c_2 x_2 + c_3 x_3 + c_4 x_4 = 0
\end{equation*}
where $0 \leq |c_i | \leq k$, and $c_1 + c_2 +c_3 + c_4 = 0$.  A 1-fold Sidon set is a Sidon set.  A 2-fold Sidon set has only trivial solutions to each of the equations
\begin{equation*}
x_1 + x_2 - x_3 - x_4 = 0, ~~~~~ 2x_1 + x_2 - 2x_3 - x_4 = 0, ~~~~~ 2x_1 - x_2 - x_3 = 0.
\end{equation*}
One can also define $k$-fold Sidon sets in $\integers_N$.  We must add the condition that $N$ is relatively prime to
all integers in the set $\{1,2, \dots , k \}$.  The reason for this is that if a coefficient $c_i \in \{1,2, \dots , k \}$ has a common factor with $N$,
then in $\mathbb{Z}_{N}$ one could have $c_i ( a_1 - a_2 ) = 0$ with $a_1 \neq a_2$.  In this case, if $|A| \geq 3$, we can choose $a_3 \in A \backslash \{a_1,a_2 \}$, and obtain the nontrivial solution $(x_1,x_2,x_3,x_4) = (a_1,a_2,a_3,a_3)$
to the equation $c_i ( x_1 - x_2 ) + x_3 - x_4 = 0$.

Lazebnik and Verstra\"{e}te \cite{LV} were the first to define $k$-fold Sidon sets.  They conjectured the following.

\begin{conjecture}[Lazebnik, Verstra\"{e}te \cite{LV}]\label{LV conjecture}
For any integer $k \geq 3$, there is a positive constant $c_k > 0$ such that for all integers $N \geq 1$, there is a
$k$-fold Sidon set $A \subset [N]$ with $|A| \geq c_k N^{1/2}$.
\end{conjecture}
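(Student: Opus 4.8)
The plan is to confirm the conjecture: for each fixed $k$ I want to construct a $k$-fold Sidon set $A\subset[N]$ with $|A|\gg_k N^{1/2}$ for every $N$.

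I would begin with a finite reduction. By definition $A$ is $k$-fold Sidon precisely when it has only trivial solutions to each of the at most $(2k+1)^4$ invariant equations $c_1x_1+c_2x_2+c_3x_3+c_4x_4=0$ with $|c_i|\le k$. It helps to note first that any Sidon set contains no three-term arithmetic progression (an AP $a,a+d,a+2d$ would give the nontrivial relation $a+(a+2d)=(a+d)+(a+d)$), so a linear equation $ax+by=c$ with $a,b\ne 0$ and $a+b\ne 0$ has at most two solutions in a Sidon set. From this one checks that once $A$ is Sidon it automatically has only trivial solutions to every equation in our list except two families: the ``ratio'' equations $c_i(x_1-x_2)=c_j(x_3-x_4)$ with $1\le c_i<c_j\le k$ (and their degenerations obtained by identifying variables, such as $3x_1=x_2+2x_3$), and the genuine four-variable equations of genus $1$, which first occur at $k=3$, the prototype being $2x_1+2x_2=3x_3+x_4$.

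For the ratio family it is enough to arrange, on top of Sidonicity, that the dilates $1\cdot D,2\cdot D,\dots,k\cdot D$ of the nonzero difference set $D=(A-A)\setminus\{0\}$ are pairwise disjoint --- equivalently that no two nonzero differences $d,d'$ obey $pd=qd'$ with $1\le p<q\le k$. Here I would take a near-extremal Sidon set with well-distributed differences (a Bose--Chowla set in $\mathbb{Z}_{q^2-1}$, say) and invoke the construction behind the paper's lower bound for the relaxed notion, now run with the coefficients $1,\dots,k$; this costs only a factor depending on $k$.

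The genus-$1$ family is the real obstacle, and I expect it to be the hardest step. A genus-$1$ four-variable equation has $\Theta(N)$ solutions in any Sidon set of size $\sim N^{1/2}$: fixing the two variables whose coefficients have nonzero sum determines the remaining two up to two choices, so a naive transversal-deletion argument leaves only $\Theta(N^{1/3})$ elements; moreover the spherical Behrend device that removes three-term progressions with no loss does not remove $2x_1+2x_2=3x_3+x_4$ --- precisely the reason the genus problem is open for that equation. The route I would pursue is to impose a base-$b$ digital structure with $b=b(k)$, in the spirit of the Erd\H os--Tur\'an and Bose--Chowla Sidon constructions: represent each element through local data $\phi_j$ indexed by digit position $j$, chosen so that (i) the local data simultaneously encode the Sidon and ratio-avoidance information of the previous step, and (ii) for every genus-$1$ coefficient vector with $|c_i|\le k$ the corresponding equation is forced to hold digit by digit with no carries, hence reduces to a single-digit version whose only solutions, by the choice of the local ``digit set'' $\Sigma$, are the trivial ones. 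The crux --- and the place where a new idea is needed --- is to produce such a $\Sigma$ that is rigid for all $O(k^4)$ coefficient vectors at once while keeping $|\Sigma|\asymp b^{1/2}$, so that iterating the digital construction over $[N]$ loses only a $k$-dependent constant rather than a power of $N$, and to mesh this with the ratio step so that carries never reintroduce a solution across digit positions.
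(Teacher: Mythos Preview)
The statement you are attempting to prove is Conjecture~1.1, and the paper does not prove it: immediately after stating it the authors write ``This conjecture is still open.'' So there is no ``paper's own proof'' to compare against, and your task was not to supply one.

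That said, your proposal is not a proof but a research plan, and you yourself identify the gap. The reduction to two families---the ratio equations $c_i(x_1-x_2)=c_j(x_3-x_4)$ and the genus-$1$ equations such as $2x_1+2x_2=3x_3+x_4$---is essentially correct, and the paper's Theorem~1.4 (together with the Lazebnik--Verstra\"ete construction it extends) shows how to handle the ratio family at the cost of a constant factor. But for the genus-$1$ family you offer only a wish: a base-$b$ digit set $\Sigma$ of size $\asymp b^{1/2}$ that is simultaneously rigid for all $O(k^4)$ genus-$1$ coefficient vectors and carry-free under the relevant linear forms. You do not construct such a $\Sigma$, and you explicitly say ``a new idea is needed'' at exactly this point. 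That is the whole difficulty. Ruzsa's construction (cited in the paper as giving $|A|\ge c_k N^{1/2}e^{-c_k\sqrt{\log N}}$) already implements a digit-based scheme of precisely this flavour, and the sub-polynomial loss there comes from exactly the tension you describe: forcing no carries while keeping the digit set large. Removing that loss for even the single equation $2x_1+2x_2=3x_3+x_4$ is an open instance of Ruzsa's genus problem, as the paper notes. Your outline does not get past this obstacle; it restates it.
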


This conjecture is still open.  Lazebnik and Verstra\"{e}te proved that for infinitely many $N$, there is a 2-fold Sidon set $A \subset \integers_N$ with $|A| \geq \frac{1}{2}N^{1/2} - 3$.  Axenovich \cite{A} and Verstra\"{e}te (unpublished) observed that
one can adapt Ruzsa's construction for four variable equations (Theorem 7.3, \cite{R}) to construct $k$-fold Sidon
sets $A \subset [N]$ or $A \subset \integers_N$ with $|A| \geq c_k N^{1/2} e^{ - c_k \sqrt{ \log N } }$ for any $k \geq 3$.
An affirmative answer to Conjecture~\ref{LV conjecture}, even in the case when $k=3$, would have applications to hypergraph Tur\'{a}n problems \cite{LV} and extremal graph theory \cite{TV}.

Since any $k$-fold Sidon set is a Sidon set, the trivial upper bound $|A|\le \sqrt{N-3/4}+1/2$ for a Sidon set
$A \subset \mathbb Z_N$, and the Erd\H{o}s-Tur\'{a}n bound  $|A| \leq N^{1/2} + O(N^{1/4})$ for any Sidon set $A \subset [N]$, also hold for  $k$-fold Sidon sets.
We will obtain better upper bounds for $k$-fold Sidon sets.  Instead of considering all the possible equations $c_1x_1+c_2x_2+c_3x_3+c_4x_4=0$ with $c_1+c_2+c_3+c_4=0$, we will take advantage only of the equations of the form
$$c_1(x_1-x_2)=c_2(x_3-x_4).$$

For any $c_1,\dots,c_k$ with $(c_i,N)=1$, if $A\subset \mathbb Z_N$ contains only trivial solutions
to $c_i(x_1-x_2)=c_j(x_3-x_4)$ for each $1 \le i \le j \le k$, then
\begin{equation}\label{group}
|A|\le \sqrt{ \frac{N-1}k+\frac 14 }+\frac 12.
\end{equation}
To see this, consider all elements of the form $c_i(x-y)$ where $1 \le i \le k$, and $x \neq y$ are elements of $A$.
All of these elements are distinct and nonzero.  Therefore, $k|A|(|A|-1)\le N-1$ which is equivalent to (\ref{group}).

The short counting argument used to obtain \eqref{group} does not work in $\integers$.
Using a more sophisticated argument, we can show that a bound similar to \eqref{group} does hold in $\mathbb{Z}$.

\begin{theorem}\label{k ub}
Let $k \geq 1$ be an integer and $1 \leq c_1 < c_2 <  \dots < c_k$ be a set of $k$ distinct integers.  If $A \subset [N]$ is a set with only trivial solutions to $c_i (x_1 - x_2)  = c_j (x_3 - x_4)$ for each $1 \leq i \leq j \leq k$, then
\begin{equation*}
|A| \leq \left( \frac{N}{k} \right)^{1/2} + O \left( \left(  \frac{c_k^2N}{k} \right)^{1/4} \right).
\end{equation*}
\end{theorem}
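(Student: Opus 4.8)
The plan is to run an Erd\H{o}s--Tur\'an/Lindstr\"om-type window argument for $A$, but fuelled not only by the fact that $A$ is a Sidon set but by the stronger fact that the $k$ dilated difference sets $c_1(A-A),\dots,c_k(A-A)$ are pairwise disjoint away from $0$. First I would record the structural content of the hypothesis. Taking $i=j$ in $c_i(x_1-x_2)=c_j(x_3-x_4)$ merely says $A$ is Sidon, so the difference representation function $r(\delta)=\#\{(a,a')\in A^2:a-a'=\delta\}$ is $0$ or $1$ for $\delta\neq0$. Taking $i<j$, the only trivial solutions of $c_ix_1-c_ix_2-c_jx_3+c_jx_4=0$ are those with $x_1=x_2$ and $x_3=x_4$, so $c_i(A-A)\cap c_j(A-A)=\{0\}$. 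Hence the numbers $c_i\delta$, with $1\le i\le k$ and $\delta$ ranging over the $\binom{|A|}{2}$ positive differences of $A$, are $k\binom{|A|}{2}$ \emph{distinct} positive integers, all at most $c_k(N-1)$. Writing $S_w(B):=\sum_{x\in\integers}\binom{|B\cap(x,x+w]|}{2}$ for the $w$-window pair count of $B\subset\integers$, this distinctness gives, for every $L\ge1$,
\[
\sum_{i=1}^k S_L(c_iA)=\sum_{v=1}^{L-1}(L-v)\cdot\#\{(i,a,a'):a>a',\,c_i(a-a')=v\}\le\binom{L}{2},
\]
and since $S_L(c_iA)\ge c_i\,S_{\lfloor L/c_i\rfloor}(A)$ by the substitution $b=c_ia$, we obtain the key inequality $\sum_{i=1}^k c_i\,S_{\lfloor L/c_i\rfloor}(A)\le\binom{L}{2}$ for all $L\ge1$.

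Next I would set up the window argument for $A$ with a window length $m$ to be chosen later. The two classical ingredients are the identity $\sum_x|A\cap(x,x+m]|=m|A|$ and Cauchy--Schwarz over the at most $N+m$ values of $x$ with nonzero count, giving $\sum_x|A\cap(x,x+m]|^2\ge m^2|A|^2/(N+m)$, i.e.\ $2S_m(A)+m|A|\ge m^2|A|^2/(N+m)$. For a plain Sidon set one bounds $S_m(A)\le\binom m2$; the role of the dilation inequality is to improve this to essentially $S_m(A)\le \tfrac1k\binom m2$ plus a boundary error. Morally, the $k$ dilated copies of the differences of $A$ landing in a fixed window must share the $\binom m2$ available slots, so a window can hold only a $1/k$-fraction of the differences of $A$ that a Sidon set would allow; quantitatively, one feeds the pointwise Cauchy--Schwarz lower bound for each $S_{\lfloor L/c_i\rfloor}(A)$ into $\sum_i c_i S_{\lfloor L/c_i\rfloor}(A)\le\binom L2$ for an $L$ coupled appropriately to $m$, while carefully isolating the truncation loss caused by the dilate $c_iA$ occupying an interval of length $c_iN$ rather than $N$. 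This is where the $c_k$-dependence first appears, and only in an error term.

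With a bound of the form $S_m(A)\le\tfrac1k\binom m2+(\text{error in }c_k,m)$ in hand, I would substitute into $m^2|A|^2/(N+m)\le 2S_m(A)+m|A|$, arriving at a quadratic inequality in $|A|$ of the shape $|A|^2\le \tfrac{N}{k}+\tfrac{m}{k}+\tfrac{|A|N}{m}+\tfrac{c_kN}{m}+O(\cdots)$, and then optimize the free parameter $m$. Balancing the $m/k$ term against the $(|A|+c_k)N/m$ terms forces $m$ to be a suitable power of $N$ (roughly $(kN\max(|A|,c_k))^{1/2}$), and this choice yields the quadratic bound $|A|^2\le N/k+O\!\big((c_k^2N/k)^{1/2}\big)$, hence $|A|\le(N/k)^{1/2}+O\!\big((c_k^2N/k)^{1/4}\big)$.

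The main obstacle is the second step: converting the disjointness of the $k$ dilated difference sets into the factor $1/k$ in the window bound for $A$ itself. The naive route --- applying Cauchy--Schwarz separately to each dilate $c_iA$, which lives in an interval of length $c_iN$ --- replaces $N$ by $c_kN$ in the denominator and only yields $|A|\lesssim(c_kN/k)^{1/2}$; recovering the clean main term $(N/k)^{1/2}$ requires choosing the dilate window $L$ and the window $m$ for $A$ in a coordinated way and treating the $c_k$-sized boundary of each $c_iA$ separately, so that it contributes solely to the error term $O((c_k^2N/k)^{1/4})$.
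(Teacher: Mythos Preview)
Your overall strategy --- use the pairwise disjointness of the dilated difference sets $c_i(A-A)$ to strengthen the Erd\H{o}s--Tur\'an window bound --- is the right one, but the way you implement it (dilating $A$ to $c_iA$) is dual to what the paper does, and that duality is not innocuous: it changes the main term.  Carrying out exactly the computation you describe (plug the Cauchy--Schwarz lower bound $2S_{m_i}(A)\ge m_i^2|A|^2/(N+m_i)-m_i|A|$ with $m_i=\lfloor L/c_i\rfloor$ into $\sum_i c_i S_{m_i}(A)\le\binom{L}{2}$) gives, after simplification,
\[
|A|^2\sum_{i=1}^k\frac{1}{c_iN+L}\ \le\ 1+\frac{k|A|}{L}.
\]
The best lower bound available for the sum is $\sum_i 1/(c_iN+L)\to \big(\sum_i c_i^{-1}\big)/N$ as $L/N\to0$, so the best main term you can extract is $|A|\le\big(N/\sum_i c_i^{-1}\big)^{1/2}$, not $(N/k)^{1/2}$.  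For instance with $k=2$, $c_1=1$, $c_2=2$ your scheme yields $|A|\lesssim(2N/3)^{1/2}$ rather than $(N/2)^{1/2}$; for general distinct $c_i\ge1$ the harmonic sum is at most $1+\tfrac12+\cdots+\tfrac1k\sim\log k$, so you only reach $(N/\log k)^{1/2}$.  The loss is not a boundary effect that can be trimmed by ``treating the $c_k$-sized boundary separately'': it comes from the bulk fact that $c_iA$ lives in an interval of length $c_iN$, putting $c_iN$ rather than $N$ in the Cauchy--Schwarz denominator.

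The paper avoids this by going in the opposite direction: instead of dilating $A$, it \emph{contracts} it, setting $B_{r,i}=\{x:c_rx+i\in A\}\subset[N/c_r]$ for $0\le i<c_r$.  The hypothesis translates into $\sum_{r,i} r_{B_{r,i}-B_{r,i}}(y)\le1$ for every $y\neq0$, and now the same Cauchy--Schwarz window of fixed length $m$ is applied to each $B_{r,i}$, whose range $N/c_r$ is \emph{smaller} than $N$.  Summing and using $|A|^2\le c_r\sum_i|B_{r,i}|^2$ yields
\[
m^2|A|^2\sum_{r=1}^k\frac{1}{N+c_rm}\ \le\ m(k|A|+m),
\]
with the $c_r$ multiplying $m$ in the denominator rather than $N$; now $\sum_r 1/(N+c_rm)\ge k/(N+c_km)$ gives the clean main term $(N/k)^{1/2}$ and pushes all $c_k$-dependence into the error.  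So the missing idea is to pass to the contracted sets $B_{r,i}$ rather than the dilates $c_iA$.
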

Taking $c_j=j$ for $1\le j\le k$, we have the following corollary.
\begin{corollary}\label{k cor}
If $k \geq 1$ is an integer and $A \subset [N]$ is a $k$-fold Sidon set, then
\[
|A| \leq \left( \frac{N}{k} \right)^{1/2} + O ( (kN)^{1/4} ).
\]
\end{corollary}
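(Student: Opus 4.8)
The plan is to adapt the Erdős–Turán window argument. First I record what the hypothesis gives. Put $n=|A|$, $S=(A-A)\cap\mathbb Z_{>0}$, and $\Phi=\bigcup_{i=1}^k c_iS$ (a disjoint union, as we check now). If $a,a',b,b'\in A$, $a\ne a'$, and $1\le i\le j\le k$, then $c_i(a-a')=c_j(b-b')$ would be a nontrivial solution of $c_i(x_1-x_2)=c_j(x_3-x_4)$ when $i<j$, and of the Sidon equation $c_i(x_1-x_2)=c_i(x_3-x_4)$ with $\{a,a'\}\ne\{b,b'\}$ when $i=j$; both are excluded. Hence the $k\binom n2$ integers $c_i(a-a')$ ($1\le i\le k$, $a>a'$ in $A$) are pairwise distinct, so $|\Phi|=k\binom n2$ and $\Phi\subseteq[1,c_k(N-1)]$. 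By itself this yields only $k\binom n2\le c_k(N-1)$, i.e.\ $|A|\le(2c_kN/k)^{1/2}$, which is exactly analogous to deducing $|A|\le(2N)^{1/2}$ for a Sidon set by counting differences; the theorem replaces this first–moment count by a second–moment (Erdős–Turán) count, saving the factor $2c_k$.

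For the second moment I would run the window count of $A$ at the $k$ scales $u/c_1,\dots,u/c_k$ at once, where $u$ is a parameter chosen at the end. For each $i$ set $m_i=\lfloor u/c_i\rfloor$ and, for $x\in\mathbb Z$, let $f_i(x)=|A\cap(x,x+m_i]|$. Then $\sum_x f_i(x)=nm_i$ over roughly $N$ values of $x$, and because $A$ is Sidon, $\sum_x f_i(x)^2=nm_i+2\sum_{d\in S,\,d<m_i}(m_i-d)$; Cauchy–Schwarz gives $\sum_x f_i(x)^2\ge (nm_i)^2/(N+m_i-1)$. So for each $i$,
\[
\frac{n^2m_i^2}{N+m_i-1}-nm_i\ \le\ 2\sum_{d\in S,\,d<m_i}(m_i-d).
\]
Now multiply by a suitable weight and sum over $i$. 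On the right, substituting $v=c_id$ turns $c_i\sum_{d\in S,\,d<m_i}(m_i-d)$ into $\sum_{v\in c_iS,\ v<c_im_i}(c_im_i-v)\le\sum_{v\in c_iS,\ v<u}(u-v)$, and by the distinctness of the colored differences $\sum_i 2\sum_{v\in c_iS,\,v<u}(u-v)=2\sum_{v\in\Phi,\,v<u}(u-v)\le u(u-1)$. Choosing the weights so that the left–hand sides add up cleanly converts the whole thing into an inequality bounding a quadratic in $n$ by $u^2$ plus lower–order terms.

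The final step is to bound the summed left–hand side from below — using $c_im_i=u+O(c_k)$ and, once $u=o(N)$, $(N+m_i-1)^{-1}=N^{-1}+O(m_iN^{-2})$ — and then choose $u$ to balance the inequality. The balance occurs for $u$ of order $c_k(N/k)^{1/2}$, which gives main term $(N/k)^{1/2}$ and error of order $u^{1/2}\asymp (c_k^2N/k)^{1/4}$; the Corollary is the case $c_j=j$, $c_k=k$. I expect the main difficulty to be precisely this accounting: one must control the $O(c_k)$ endpoint shifts, the floors $m_i=\lfloor u/c_i\rfloor$, and the boundary contributions to $\sum_x$ well enough that (a) the saving in the main term is by the full factor $k$ rather than merely by $\sum_i 1/c_i$, and (b) the error really collapses to $O((c_k^2N/k)^{1/4})$; a plausible alternative, if the weighting above is too lossy, is to run the second–moment argument directly on the $k$ dilates $c_1A,\dots,c_kA$ in their natural ranges, exploiting that $c_1S,\dots,c_kS$ are disjoint, with a single common window length.
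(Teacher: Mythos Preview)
Your outline is the right framework—the paper also uses the Erd\H{o}s–Tur\'{a}n window method—but the concern you flag in (a) is not an accounting issue, it is a genuine gap. With windows $m_i=\lfloor u/c_i\rfloor$ and weight $w_i=c_i$ (forced if the right-hand sides are to combine via $\Phi$ into $\sum_{v\in\Phi,\,v<u}(u-v)\le u^2/2$), the summed left-hand side is, to main order,
\[
\sum_{i=1}^k c_i\,\frac{n^2 m_i^2}{N}\;=\;\frac{n^2 u^2}{N}\sum_{i=1}^k\frac{1}{c_i},
\]
and after optimisation you obtain only $n\le \bigl(N/\sum_i c_i^{-1}\bigr)^{1/2}+\text{error}$; for $c_i=i$ this is $(N/H_k)^{1/2}$ with $H_k=1+\tfrac12+\dots+\tfrac1k$, not $(N/k)^{1/2}$. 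Your proposed alternative of running the argument on the dilates $c_iA\subset[c_iN]$ with a common window $m$ has the identical defect: the $i$-th left side is $n^2m^2/(c_iN+m)$, and summing again produces $\sum_i 1/c_i$. No reweighting repairs this: normalising $\max_i(w_i/c_i)=1$ forces $w_i\le c_i$, hence $\sum_i w_i/c_i^2\le\sum_i 1/c_i$ is the best possible coefficient on the left.

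The missing idea is to \emph{compress} rather than dilate. For each $r$ partition $A$ by residues modulo $c_r$ and set $B_{r,i}=\{x:c_rx+i\in A\}\subset[0,N/c_r]$. The hypothesis translates into $\sum_{r,i}r_{B_{r,i}-B_{r,i}}(y)\le 1$ for every $y\ne 0$, the analogue of your disjointness of the $c_iS$. Apply the window inequality to each $B_{r,i}$ with a \emph{single} window $\{0,\dots,m-1\}$: the range $N/c_r$ is short, and Cauchy–Schwarz over the $c_r$ residue classes gives $\sum_i|B_{r,i}|^2\ge|A|^2/c_r$, whose $1/c_r$ exactly cancels the $c_r$ in the range. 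Thus each $r$ contributes at least $|A|^2m^2/(N+c_rm)$ on the left, the sum over $r$ is at least $k|A|^2m^2/(N+c_km)$, and the optimisation then proceeds as you describe.
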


It is natural to ask if we can improve Corollary~\ref{k cor} if we make full use of the assumption that $A$ is a $k$-fold Sidon set.
For example, the bound $|A| \leq (N/3)^{1/2} + O(N^{1/4})$ holds under the assumption that
$A \subset [N]$ has only trivial solutions to $c_1 (x_1 - x_2) = c_2 (x_3 - x_4)$ for each $1 \leq c_1 \leq c_2 \leq 3$.  A 3-fold Sidon set additionally has only trivial solutions to $2x_1 + 2x_2 = 3x_3 + x_4$.  Our argument does not capture this property.  It is not known if this additional assumption would improve the upper bound $|A| \leq (N/3)^{1/2} + O(N^{1/4})$.

The method used by Lazebnik and Verstra\"{e}te to construct 2-fold Sidon sets is rather robust.
Using this method, we prove the following theorem.

\begin{theorem}\label{weak construction}
There exist $k$ distinct integers $c_1,\dots,c_k$ and  infinitely many $N$, such that there is a set $A \subset \integers_{N}$ with
\[
|A| \geq \frac{N^{1/2}}{k}(1 - o(1) )
\]
and  having only trivial solutions to $c_i(x_1 - x_2) = c_j(x_3 - x_4)$ for each $1\le i\le j\le k$.
\end{theorem}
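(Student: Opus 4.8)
The plan is to follow the template of Lazebnik and Verstra\"ete's $2$-fold construction, which lives inside a Bose-type Sidon set. Fix a prime power $q$, restricted to a suitable arithmetic progression so that the coefficients chosen below are units modulo $N$, put $N = q^2 - 1$, and let $\theta$ generate $\mathbb{F}_{q^2}^{\times}$. For $S \subseteq \mathbb{F}_q$ set $A(S) = \{\log_\theta(\theta + x) : x \in S\} \subseteq \mathbb{Z}_N$; this is a subset of the Bose set $A(\mathbb{F}_q)$, hence a Sidon set. Two identities are used repeatedly: with $T = \operatorname{Tr}\theta$ and $P(x) = \mathcal{N}(\theta+x) = x^2 + Tx + \mathcal{N}\theta \in \mathbb{F}_q^{\times}$, one has $(\theta+x)^q = (T-\theta) + x$ and $(\theta+x)^{q+1} = P(x)$ for every $x \in \mathbb{F}_q$. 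Since $A(S)$ is Sidon, each equation $c(x_1-x_2)=c(x_3-x_4)$ with $c$ a unit has only trivial solutions in $A(S)$, so it suffices to control the mixed equations $c_i(x_1-x_2)=c_j(x_3-x_4)$ with $i<j$.

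I would first record the case $k=2$, which is in essence Lazebnik and Verstra\"ete's argument: take $c_1 = 1$, $c_2 = q$. A nontrivial solution of $x_1 - x_2 = q(x_3 - x_4)$ in $A(S)$ amounts to $\tfrac{\theta+x_1}{\theta+x_2} = \tfrac{(T-\theta)+x_3}{(T-\theta)+x_4}$; cross-multiplying, reducing $\theta^2 = T\theta - \mathcal{N}\theta$, and comparing the coefficients of $1$ and $\theta$ yields $x_1 - x_2 = x_4 - x_3$ together with $(x_1-x_2)(T + x_1 + x_3) = 0$. Hence a nontrivial solution exists exactly when $S$ contains two distinct elements $x_1,x_2$ with $-T - x_1,\, -T - x_2 \in S$, i.e.\ exactly when $|S \cap (-T-S)| \ge 2$. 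Choosing $S$ to be a subinterval of $\mathbb{Z}_q \cong \mathbb{F}_q$ lying on one side of the reflection centre $-T/2$, of length $\lfloor q/2\rfloor$, makes $S$ and $-T-S$ disjoint; then $A = A(S)$ works and $|A| = \lfloor q/2\rfloor = (1+o(1))N^{1/2}/2$.

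For general $k$ I would keep $N = q^2 - 1$ and choose $c_1,\dots,c_k$ so that, via the two identities above, each power map $x \mapsto (\theta+x)^{c_i}$ reduces to a controlled expression $P(x)^{e_i}\bigl(\alpha_i\theta + \beta_i(x)\bigr)$ with $\alpha_i \in \mathbb{F}_q$ fixed and $\beta_i$ of low degree. Substituting into $c_i(x_1-x_2) = c_j(x_3-x_4)$ and reducing modulo the minimal polynomial of $\theta$ turns the $(i,j)$-th mixed equation into a pair of polynomial relations among $x_1,x_2,x_3,x_4$ over $\mathbb{F}_q$; one then shows that every nontrivial solution forces $(x_1,x_2,x_3,x_4)$ onto a subvariety of a fixed, $(i,j)$-independent shape --- a few reflection relations $x_a + x_b = -T$ plus a relation among the norm factors $P(x_\ell)$, the latter being vacuous once $S$ avoids solutions of $P(x)=P(x')$, i.e.\ once $|S\cap(-T-S)|\le 1$. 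Finally one picks a single interval $S\subseteq\mathbb{F}_q$ of length $(1+o(1))q/k$, positioned relative to the finitely many reflection centres and short enough to avoid wrap-around in $\mathbb{Z}_q$, so that all $\binom k2$ systems are killed simultaneously; then $A = A(S)$ satisfies the conclusion, with $|A| = |S| = (1+o(1))N^{1/2}/k$.

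The step I expect to be the main obstacle is the general-$k$ design: producing coefficients for which all $\binom k2$ mixed equations degenerate to conditions of reflection/translation type, so that one interval of density $\asymp 1/k$ suffices, and checking that the norm-factor constraints really do collapse under the single hypothesis $|S\cap(-T-S)|\le 1$. This is also where the factor $1/k$ (rather than the counting-bound $1/\sqrt k$) enters: the method confines $S$ to one ``fundamental interval'' cut out by the reflection centres, whose length is a $(1+o(1))/k$ fraction of $q$.
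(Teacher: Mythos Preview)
Your plan has a real gap at exactly the point you flag: for general $k$ you never produce the coefficients $c_1,\dots,c_k$, never write down the $\binom{k}{2}$ polynomial systems, and never show they all collapse to reflection-type constraints that a single interval of length $(1+o(1))q/k$ can avoid. The ``controlled expression'' $(\theta+x)^{c_i}=P(x)^{e_i}(\alpha_i\theta+\beta_i(x))$ with $\alpha_i$ \emph{constant} in $x$ essentially forces $c_i\in\{1,q\}\pmod{q+1}$, since for any other small $c$ (e.g.\ $c=2$) one gets $(\theta+x)^2=(T+2x)\theta+(x^2-\mathcal N\theta)$ with $\theta$-coefficient depending on $x$; so the clean $k=2$ picture does not obviously scale. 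There is also a mismatch with the statement: the theorem asks for \emph{fixed} integers $c_1,\dots,c_k$ that work for infinitely many $N$, while your $c_2=q$ varies with $N=q^2-1$.

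The paper avoids the polynomial analysis entirely by exploiting a symmetry you do not use: the Frobenius map $b\mapsto pb$ on $\mathbb Z_{q^2-1}$ sends the Bose--Lindstr\"om set $B(q,\theta)=\{b:\theta^b+\theta^{qb}=1\}$ to itself (just raise $\theta^b+\theta^{qb}=1$ to the $p$th power). So with the \emph{fixed} choice $c_j=p^{j-1}$, any equation $a_1-a_2=p^{\,j}(a_3-a_4)$ inside $B$ becomes $a_1-a_2=b_3-b_4$ with $b_\ell=p^{\,j}a_\ell\in B$, and the Sidon property of $B$ forces either a trivial solution or $a_1=p^{\,j}a_3$. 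The set $A$ is then obtained by decomposing $B$ into orbits of $b\mapsto pb$ and keeping only every $k$th element of each orbit; by construction no two kept elements in the same orbit are within $p$-distance $<k$, which kills the remaining case. A short cycle-length estimate (choosing $q=p^{r^i}$ with $r$ a large prime so that almost all orbits are long) gives $|A|\ge (1-o(1))q/k$. This is where the $1/k$ comes from in the paper: it is the thinning ratio along Frobenius orbits, not an interval length cut by reflection centres.
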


The next section contains the proof of Theorem~\ref{k ub}.  Section 3 contains the proof of
Theorem~\ref{weak construction}.


\section{Proof of  Theorem~1.2}


 For finite sets $B , C \subset \mathbb{Z}$, define
\[
r_{B - C}(x) = |  \{ (b,c) : b - c = x , b \in B , c \in C \} |.
\]
The following useful lemma has appeared in the literature (see \cite{C} or \cite{R}).
\begin{lemma}\label{lemma1}
For any finite sets $B , C \subset \mathbb{Z}$,
\begin{equation}\label{eq lemma1}
\frac{  ( |B| |C| )^2 }{ | B + C | } \leq |B | |C| + \sum_{x \neq 0} r_{B - B} (x) r_{C - C}(x).
\end{equation}
\end{lemma}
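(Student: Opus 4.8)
The plan is to apply the Cauchy--Schwarz inequality to the representation function of the sumset $B+C$. Writing $r_{B+C}(s) = | \{ (b,c) : b + c = s,\ b \in B,\ c \in C \} |$, I first observe that summing over all $s$ counts each pair $(b,c) \in B \times C$ exactly once, so $\sum_s r_{B+C}(s) = |B||C|$. Since $r_{B+C}$ is supported on $B+C$, a set of exactly $|B+C|$ elements, Cauchy--Schwarz over this support gives
\[
( |B||C| )^2 = \Bigl( \sum_{s \in B+C} r_{B+C}(s) \Bigr)^2 \leq |B+C| \sum_s r_{B+C}(s)^2 ,
\]
which rearranges to $\frac{ (|B||C|)^2 }{ |B+C| } \leq \sum_s r_{B+C}(s)^2$. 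This already produces the left-hand side of \eqref{eq lemma1}, so the remaining task is to evaluate the energy $\sum_s r_{B+C}(s)^2$.

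The key step is to identify $\sum_s r_{B+C}(s)^2$ with the right-hand side of \eqref{eq lemma1}. By definition this energy counts the quadruples $(b_1 , c_1 , b_2 , c_2) \in B \times C \times B \times C$ with $b_1 + c_1 = b_2 + c_2$. I would rewrite the defining equation as $b_1 - b_2 = c_2 - c_1$ and sort the quadruples by the common value $x = b_1 - b_2$. The quadruples with $x = 0$ are precisely those with $b_1 = b_2$ and $c_1 = c_2$, of which there are $|B||C|$. For each fixed $x \neq 0$, the pairs $(b_1 , b_2)$ with $b_1 - b_2 = x$ number $r_{B - B}(x)$, and the pairs $(c_2 , c_1)$ with $c_2 - c_1 = x$ number $r_{C - C}(x)$; as these two choices are independent, the count for this $x$ is $r_{B - B}(x) \, r_{C - C}(x)$. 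Summing over all $x$ then yields
\[
\sum_s r_{B+C}(s)^2 = |B||C| + \sum_{x \neq 0} r_{B - B}(x) \, r_{C - C}(x) ,
\]
and substituting this into the Cauchy--Schwarz bound gives \eqref{eq lemma1} immediately.

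I do not expect a genuine obstacle here: the whole argument is one application of Cauchy--Schwarz followed by a change of variables in a quadruple count, and it is entirely self-contained. The only point requiring a moment of care is the orientation of the difference in the $c$-variables, namely checking that $c_2 - c_1 = x$ is tallied by $r_{C - C}(x)$ rather than $r_{C - C}(-x)$. This is harmless, since $x$ ranges over all nonzero integers and any reindexing $x \mapsto -x$ permutes the terms of the sum without changing its value. I therefore anticipate that the proof will be short and elementary.
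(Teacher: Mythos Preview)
Your proposal is correct and follows exactly the same approach as the paper: apply Cauchy--Schwarz to $\sum_{s} r_{B+C}(s)$ over the support $B+C$, then rewrite the resulting energy $\sum_s r_{B+C}(s)^2$ as $\sum_x r_{B-B}(x) r_{C-C}(x)$ and split off the $x=0$ term. You give more detail in justifying the energy identity than the paper does, but the argument is the same.
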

\begin{proof}
By Cauchy-Schwarz,
\begin{eqnarray*}
\frac{  ( |B| |C| )^2 }{ | B + C | } & = & \frac{  \left(  \sum_{x \in B + C} r_{B + C}(x) \right)^2 }{ |B + C| }
\leq  \sum_x r_{B + C}^2 (x) \\
& = & \sum_x r_{B - B} (x) r_{C  - C} (x) = |B| |C| + \sum_{x \neq 0 } r_{B - B}(x) r_{C - C}(x).
\end{eqnarray*}
\end{proof}

\begin{proof}[Proof of Theorem~\ref{k ub}]  Let $1 \leq c_1 < c_2 < \dots < c_k$ be $k$ distinct integers.
Let $A \subset [N]$ be a set with only trivial solutions to $c_i (x_1 - x_2) = c_j (x_3 - x_4)$ for each $1 \leq i \leq j \leq k$.
Let
\[
B_{r,i} = \{ x : c_r x + i \in A \}
\]
for $1 \leq r \leq k$ and $0 \leq i \leq c_r -1$.  
 Therefore,
 $$|A| =\sum_{i=0}^{c_r - 1} |\{a\in A:\ a\equiv i\pmod{c_r}\} |= \sum_{i=0}^{c_r - 1} | B_{r,i} |$$ so by Cauchy-Schwarz,
\begin{equation}\label{eq 2}
|A|^2 = \left(  \sum_{i=0}^{c_r-1} |B_{r,i} | \right)^2 \leq c_r \sum_{i=0}^{c_r-1} |B_{r,i} |^2.
\end{equation}
For any $y \neq 0$,
\begin{equation}\label{eq 1.5}
\sum_{r = 1}^{k} \sum_{i=0}^{c_r - 1} r_{B_{r,i} - B_{r,i} } (y) \leq 1.
\end{equation}
To see this, suppose
\begin{equation}\label{eq 2.5}
y = x_1 - x_2 = x_3 - x_4
\end{equation}
where $x_1 , x_2 \in B_{r,i}$ and $x_3 , x_4 \in B_{r' , i'}$ for some
$1 \leq r, r' \leq k$, $1 \leq i \leq c_r - 1$, and $1 \leq i' \leq c_{r'} - 1$.  There are elements $a_1 , a_2 , a_3 , a_4 \in A$ such that
\[
c_r x_1 + i = a_1,~~ c_r x_2 + i = a_2,~~ c_{r'} x_3 + i' = a_3, ~~\mbox{and}~~c_{r'} x_4 + i' = a_4.
\]
Then (\ref{eq 2.5}) implies
\[
\frac{1}{c_r} (a_1 - i ) - \frac{1}{c_r} ( a_2 - i) = \frac{1}{c_{r'}} ( a_3 - i') - \frac{1}{ c_{r'} } ( a_4 - i'),
\]
thus $c_{r'} (a_1 - a_2) = c_r (a_3 - a_4)$.  Since $y \neq 0$, we have $a_1 \neq a_2$ and $a_3 \neq a_4$ and the we would have a non trivial solution of the equation.

Let $C = \{ 0 ,1, \dots , m-1 \}$.  For any $1 \leq r \leq k$ and $0 \leq i \leq c_r-1$,
the set $B_{r,i} + C$ is contained in the interval $ \{ 0 ,1 , \dots , N/c_r + m - 1 \}$.
This gives the trivial estimate $| B_{r , i } + C | \leq N/c_r  + m$.  By Lemma~\ref{lemma1},
\[
\frac{ |B_{r,i}|^2 m^2 }{ N/c_r + m} \leq |B_{r,i} | m + \sum_{y \neq 0} r_{B_{r,i} - B_{r,i} } (y) r_{C - C}(y).
\]
We sum this inequality over all $1 \leq r \leq k$ and $0 \leq i \leq c_r-1$ to get
\begin{eqnarray*}
m^2 \sum_{r =1 }^{k} \frac{1}{N/c_r  + m } \sum_{i=0}^{c_r-1} |B_{r,i} |^2 & \leq & \sum_{r = 1}^{k} \sum_{i=0}^{c_r-1} |B_{r,i} | m \\
& + & \sum_{y \neq 0} \sum_{r = 1}^{k} \sum_{i=0}^{c_r-1} r_{B_{r,i}  - B_{r,i} }(y)  r_{C - C}(y) \\
& \leq & k |A|m + \sum_{y \neq 0} r_{C - C}(y) \\
& \leq & m( k |A| + m ).
\end{eqnarray*}
From (\ref{eq 2}) we deduce
\begin{equation}\label{eq 3}
m^2 |A|^2 \sum_{r =1}^{k} \frac{1}{ N + c_r m} \leq m( k|A| + m).
\end{equation}
The left hand side of (\ref{eq 3}) is at least $\frac{ |A|^2 km^2 }{N + c_k m}$.  Therefore,
$\frac{ |A|^2 km }{N + c_k m} \leq k |A| + m$, and
\[
|A|^2 km \leq ( N + c_k m)( m + k |A| ).
\]
From this inequality, we obtain
\begin{eqnarray*}
\left( |A| - \left( \frac{N}{2m} + \frac{c_k}{2} \right) \right)^2 & \leq & \frac{N}{k} + \frac{c_k m}{k} + \left( \frac{N}{2m} + \frac{c_k}{2} \right)^2 \\
& \leq & \frac{N}{k} + \frac{c_k m}{k} + \frac{N^2}{2m^2} + \frac{c_k^2}{2} \\
& = & \frac{N}{k} \left( 1 + \frac{c_k m}{N} + \frac{Nk}{2m^2} + \frac{k c_k^2}{2N} \right).
\end{eqnarray*}
Upon solving for $|A|$, we get
\begin{eqnarray*}
|A| & \leq & \left( \frac{N}{k} \right)^{1/2} \left(1 + \frac{c_k m}{N} + \frac{Nk}{2m^2} + \frac{k c_k^2}{2N} \right) + \frac{N}{2m} + \frac{c_k}{2} \\
& \leq & \left( \frac{N}{k} \right)^{1/2} + \frac{c_k m}{k^{1/2} N^{1/2}} + \frac{ N^{3/2} k^{1/2} }{2m^2}
+ \frac{k^{1/2}c_k^2 }{2N^{1/2} } + \frac{N}{2m} + \frac{c_k}{2}.
\end{eqnarray*}
Take $m = \lceil (N^{3/4} k^{1/4})/c_k^{1/2} \rceil$ to get $|A| \leq \left( \frac{N}{k} \right)^{1/2} + O ( (c_k^2 N/k)^{1/4})$.
This completes the proof of Theorem~\ref{k ub}.

\end{proof}


\section{Proof of Theorem~1.4}


Let $k \geq 2$ be an integer.  Let $p$ be a prime, and let $M \geq 1$ be a large integer.
Let $r$ be any prime with $r > Mk$.  Let $i \geq 1$ be an integer, and set $t = r^i$ and $q = p^t$.

We will prove that  for $c_j=p^{j-1}$ for $j=1,\dots k$ there exists a set $A\subset \mathbb Z_{q^2-1}$ with $|A|\ge \frac{q}{k} \left( 1 - \frac{1}{M} \right) - (p^4 - 1)(M - 1)$ and having only trivial solutions to
$$x_1-x_2=p^{j-1}(x_3-x_4)$$
for $1 \leq j \leq k$.  This proves Theorem~\ref{weak construction} because  as $i$ tends to infinity, the term
$\frac{q}{k} \left( 1 - \frac{1}{M} \right)$ is the dominant term.  $M$ can be taken  as large as we want, and $(p^4 - 1)(M-1)$ is constant with respect to $i$.

Let $\theta$ be a generator of the cyclic group $\mathbb{F}_{q^2}^{*}$.
Bose and Chowla \cite{BC} proved that the set
\begin{equation*}
C (q , \theta ) = \{ a \in \mathbb{Z}_{q^2 - 1} :  \theta^{a} - \theta \in \mathbb{F}_q \}
\end{equation*}
is a Sidon set in $\mathbb{Z}_{q^2 - 1}$.  Lindstr\"{o}m \cite{L} proved
\[
B(q , \theta ) = \{ b \in \mathbb{Z}_{q^2 - 1}:  \theta^b + \theta^{qb} = 1 \}
\]
is a translate of $C(q , \theta)$ and is therefore a Sidon set.

\begin{lemma}\label{closure}
The map $x \mapsto px$ is an injection from $\mathbb{Z}_{q^2- 1}$ to $\mathbb{Z}_{q^2 - 1}$ that maps $B (q , \theta )$ to $B(q, \theta )$.
\end{lemma}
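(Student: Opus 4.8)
The plan is to unwind the definition of $B(q,\theta)$ and check that multiplication by $p$ preserves it, using only elementary facts about the Frobenius map on $\mathbb{F}_{q^2}$. Recall $q = p^t$, so the map $z \mapsto z^p$ is an automorphism of $\mathbb{F}_{q^2}$ of order $2t$, and in particular $z \mapsto z^q$ is the unique nontrivial automorphism of $\mathbb{F}_{q^2}$ fixing $\mathbb{F}_q$ pointwise; moreover $z^{q^2} = z$ for all $z \in \mathbb{F}_{q^2}$. First I would record the injectivity statement: since $\theta$ generates $\mathbb{F}_{q^2}^*$, which has order $q^2-1$, and $\gcd(p, q^2-1) = 1$ (as $q^2-1$ is coprime to $p$), the map $x \mapsto px$ on $\mathbb{Z}_{q^2-1}$ is multiplication by a unit, hence a bijection — in particular an injection.

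Next, for the main assertion, take $b \in B(q,\theta)$, so $\theta^b + \theta^{qb} = 1$. I want to show $pb \in B(q,\theta)$, i.e. $\theta^{pb} + \theta^{qpb} = 1$. The key step is to apply the Frobenius automorphism $\phi: z \mapsto z^p$ to the defining equation. Since $\phi$ is a field homomorphism, $\phi(\theta^b + \theta^{qb}) = \phi(1)$ gives $(\theta^b)^p + (\theta^{qb})^p = 1$, that is $\theta^{pb} + \theta^{qpb} = 1$ (here I use that exponents live in $\mathbb{Z}_{q^2-1}$, so $(\theta^{qb})^p = \theta^{qbp} = \theta^{qpb}$, and the commutativity $qb\cdot p = q\cdot pb$ in $\mathbb{Z}_{q^2-1}$). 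This is exactly the condition for $pb \in B(q,\theta)$. Thus $x \mapsto px$ maps $B(q,\theta)$ into $B(q,\theta)$, and since it is an injection on the finite set $\mathbb{Z}_{q^2-1}$ restricting to an injection on the finite set $B(q,\theta)$, it maps $B(q,\theta)$ onto itself.

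I do not anticipate a serious obstacle here; the only point requiring a moment's care is the bookkeeping of exponents modulo $q^2-1$ versus the action of Frobenius on $\mathbb{F}_{q^2}$ — one must be sure that "raise to the $p$" on field elements corresponds to "multiply by $p$" on the discrete-log exponents, which is immediate once one fixes the generator $\theta$. If one prefers to avoid invoking $\gcd(p,q^2-1)=1$ for the surjectivity, one can note instead that multiplication by $p$ maps the finite set $B(q,\theta)$ into itself and is injective (being injective on all of $\mathbb{Z}_{q^2-1}$, as $\theta^{px} = \theta^{py}$ forces $px \equiv py$, and then $x \equiv y$ since $p$ is invertible mod $q^2-1$), hence is a bijection of $B(q,\theta)$. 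Either route closes the argument.
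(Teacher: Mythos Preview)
Your proof is correct and follows essentially the same approach as the paper: injectivity comes from $\gcd(p,q^2-1)=1$, and closure of $B(q,\theta)$ under $b\mapsto pb$ is obtained by applying the Frobenius $z\mapsto z^p$ to the equation $\theta^b+\theta^{qb}=1$. The paper's version is just more terse, writing the single line $1=(\theta^b+\theta^{qb})^p=\theta^{pb}+\theta^{q(pb)}$.
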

\begin{proof}
The map $x \mapsto px$ is 1-to-1 since $p$ is relatively prime to $q^2 -1$.  If $b \in B ( q , \theta)$, then
\[
1 = ( \theta^{b} + \theta^{qb} )^p = \theta^{pb} + \theta^{q(pb)}
\]
so $pb \in B(q , \theta)$.
\end{proof}

Let $\pi : B(q , \theta ) \rightarrow B ( q , \theta )$ be the permutation $\pi (b) = pb$.  As in \cite{LV}, we use the cycles of
$\pi$ to define $A$.  Let $\sigma = (b_1 , \dots , b_m)$ be a cycle of $\pi$.  If $m < k$, then remove all elements of $\sigma$ from $B(q,\theta)$.  If $m \geq k$, then remove all $b_j$ in $\sigma$ for which $j$ is not divisible by $k$.  Do this for each cycle
of $\pi$.  Let $A$ be the resulting subset of $B(q, \theta )$.

\begin{lemma}\label{claim 1}
For each $c \in \{1,p , p^2 , \dots , p^{k-1} \}$, $A$ has only trivial solutions to
\[
x_1 - x_2 = c( x_3 - x_4).
\]
\end{lemma}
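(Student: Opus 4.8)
The plan is to reduce the claim to the Sidon property of $B(q,\theta)$ together with the combinatorics of how $A$ meets the cycles of $\pi$. Fix $c = p^{s}$ with $0 \le s \le k-1$, and suppose $x_1 , x_2 , x_3 , x_4 \in A$ satisfy $x_1 - x_2 = p^{s}( x_3 - x_4 )$. Since multiplication by $p^{s}$ equals $\pi^{s}$ on $\mathbb{Z}_{q^2 - 1}$, this is the same as $x_1 + \pi^{s}( x_4 ) = x_2 + \pi^{s}( x_3 )$. By Lemma~\ref{closure} (applied $s$ times) $\pi^{s}$ maps $B(q,\theta)$ into itself, and since $A \subseteq B(q, \theta )$, all four of $x_1 , x_2 , \pi^{s}(x_3) , \pi^{s}(x_4)$ lie in the Sidon set $B(q,\theta)$. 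Hence $\{ x_1 , \pi^{s}(x_4) \} = \{ x_2 , \pi^{s}(x_3) \}$ as multisets, so either (i) $x_1 = x_2$ and $\pi^{s}(x_3) = \pi^{s}(x_4)$, or (ii) $x_1 = \pi^{s}(x_3)$ and $x_2 = \pi^{s}(x_4)$.

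First I would dispose of the easy possibilities. In case (i), injectivity of $\pi^{s}$ (Lemma~\ref{closure}) gives $x_3 = x_4$, and together with $x_1 = x_2$ this is a trivial solution. In case (ii) with $s = 0$ we get $x_1 = x_3$ and $x_2 = x_4$, again a trivial solution. So the entire content of the lemma is to rule out case (ii) when $1 \le s \le k-1$; this is where the definition of $A$ enters.

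For this, suppose $x_1 = \pi^{s}(x_3)$ with $x_1 , x_3 \in A$ and $1 \le s \le k-1$. Then $x_1$ and $x_3$ lie in the same cycle $\sigma = ( b_1 , \dots , b_m )$ of $\pi$, where the indices are read cyclically and $\pi( b_\ell ) = b_{\ell + 1}$. Since $x_3 \in A$, the cycle $\sigma$ was not deleted, so $m \ge k$; and by the construction of $A$ we may write $x_3 = b_{j}$ and $x_1 = b_{j'}$ with $1 \le j , j' \le m$ and $k \mid j$, $k \mid j'$. From $x_1 = \pi^{s}(x_3)$ we get $j' \equiv j + s \pmod{m}$. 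If $j + s \le m$, then $j' = j + s$, so $k \mid (j + s)$ and hence $k \mid s$, which is impossible for $1 \le s \le k-1$. If $j + s > m$, then $j > m - k$, so $j$ is the unique multiple of $k$ in the interval $( m - k , m ]$, namely $j = m - u$ with $u = m \bmod k$; since $m < j + s < 2m$ we then have $j' = j + s - m = s - u$, and $j + s > m$ forces $1 \le s - u \le k-1$, so $k \nmid j'$, contradicting $b_{j'} = x_1 \in A$. Either way case (ii) cannot occur when $s \ge 1$, so every solution of $x_1 - x_2 = c( x_3 - x_4 )$ in $A$ is covered by case (i) or by case (ii) with $s = 0$, and is therefore trivial.

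I expect the last paragraph to be the only delicate point: one must handle the cyclic indexing of $\sigma$ carefully, in particular the wrap-around case $j + s > m$, and use precisely that each element of $A$ occupies a position divisible by $k$ in its cycle while $1 \le s \le k - 1$. Note that no arithmetic property of the cycle lengths $m$ is needed for this lemma; the hypotheses involving $r$, $M$ and $q = p^{r^{i}}$ are only used later, when estimating $|A|$.
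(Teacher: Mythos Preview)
Your proof is correct and follows essentially the same route as the paper's: reduce to the Sidon property of $B(q,\theta)$ via the closure of $B$ under $x\mapsto px$, and then argue that having both $a_3$ and $p^{s}a_3$ in $A$ contradicts the construction. You are in fact more careful than the paper, which asserts the last contradiction without separating out the $s=0$ case or the cyclic wrap-around $j+s>m$; your detailed treatment of these points is sound.
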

\begin{proof}
Suppose $a_1 , a_2 , a_3 , a_4 \in A$ and $a_1 - a_2 = p^j ( a_3 - a_4 )$ for some $0 \leq j \leq k-1$.
By Lemma~\ref{closure}, there are elements $b_3 , b_4 \in B(q, \theta )$ such that $p^j a_3 = b_3$ and $p^j a_4 = b_4$.
This gives $a_1 - a_2 = b_3 - b_4$.  Since $B(q , \theta )$ is a Sidon set, either $a_1 =a_2$, $b_3 = b_4$ or
$a_1 = b_3$, $a_2 = b_4$.

If $a_1 = a_2$ and $b_3 = b_4$, then $a_3 = a_4$ and the solution $(a_1,a_2,a_3,a_4)$ is trivial.  Suppose
$a_1 = b_3$ and $a_2 = b_4$.  This implies $b_3 \in A$, so both $p^j a_3$ and $a_3$ are in $A$.  This contradicts the way in which $A$ was constructed.
\end{proof}

\begin{lemma}\label{claim 2}
$|A| \geq \frac{q}{k} \left( 1 - \frac{1}{M} \right) - (p^4 - 1) (M-1)$.
\end{lemma}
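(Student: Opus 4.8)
The plan is to track how many elements of $B(q,\theta)$ survive the pruning procedure, cycle by cycle. Recall $|B(q,\theta)| = q$ since it is a translate of a Bose--Chowla Sidon set. The permutation $\pi(b) = pb$ acts on $B(q,\theta)$, and its cycles partition $B(q,\theta)$ into classes whose sizes are the multiplicative orders relevant to the action of $p$; the key arithmetic fact I want is that every cycle length $m$ divides $t = r^i$, hence is a power of the prime $r$ (this follows because $p$ has order dividing $2t$ in $\mathbb{Z}_{q^2-1}^*$ while acting on $B(q,\theta)$, and one checks the order on $B$ is actually a power of $r$; more simply, $\theta^{p^m b} = \theta^b$ forces $p^m \equiv 1 \pmod{q^2-1}$ along the relevant subgroup, and since $q = p^t$ with $t = r^i$, the multiplicative order of $p$ modulo $q-1$ or $q+1$ is a divisor of $t$, hence a power of $r$). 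Consequently, apart from possibly short cycles, every cycle has length $m = r^s$ for some $s \geq 1$, and since $r > Mk > k$, every such cycle has length $\geq r > k$, so it is \emph{not} discarded wholesale.

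Next I would count losses. A cycle of length $m$ with $m < k$ is entirely removed; but by the previous paragraph the only cycle length less than $k$ is $m = 1$ (a fixed point, i.e.\ $r^0$), since any cycle of length a positive power of $r$ has length $\geq r > Mk \geq k$. So the elements lost to ``short cycles'' are exactly the fixed points of $\pi$. A fixed point $b$ satisfies $pb \equiv b \pmod{q^2-1}$, i.e.\ $(p-1)b \equiv 0$; the number of such $b$ in $\mathbb{Z}_{q^2-1}$ is $\gcd(p-1, q^2-1)$, which one bounds by $p^4 - 1$ (indeed $q^2 - 1 = p^{2t} - 1$ and one can extract the bound $\gcd(p-1,q^2-1)$ is small, but the paper's bound $p^4-1$ surely comes from a crude estimate on the number of fixed points of $\pi$ on $B(q,\theta)$ itself). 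For each long cycle of length $m = r^s \geq k$, the pruning keeps exactly $\lfloor m/k \rfloor$ elements and discards $m - \lfloor m/k \rfloor$; equivalently it keeps at least $m/k - 1$ elements. Summing $m/k - 1$ over the long cycles and writing $L$ for the total length of long cycles and $F$ for the number of fixed points, we get $|A| \geq L/k - (\text{number of long cycles})$ where $L = q - F$.

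The remaining step is to bound the number of long cycles. Since each long cycle has length $\geq r > Mk$, the number of long cycles is at most $L/(Mk) \leq q/(Mk)$. Therefore
\[
|A| \;\geq\; \frac{q - F}{k} - \frac{q}{Mk} \;=\; \frac{q}{k}\left(1 - \frac{1}{M}\right) - \frac{F}{k}.
\]
To finish I need $F/k \leq (p^4-1)(M-1)$, which is weaker than what the crude fixed-point count gives (the number of fixed points of $\pi$ on $B(q,\theta)$ is at most the number of fixed points on $\mathbb{Z}_{q^2-1}$, and a direct estimate shows this is bounded by a constant like $p^2-1$ or $p^4-1$ independent of $i$); since $F$ is an absolute constant and $(p^4-1)(M-1)$ absorbs it comfortably, this closes the inequality.

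The main obstacle I expect is the arithmetic in the second paragraph: pinning down that every nontrivial cycle of $\pi$ has length a power of $r$ (hence length $> Mk$), which is what makes ``short cycles'' negligible and forces the wasteful rounding $\lfloor m/k\rfloor$ to cost only an $O(q/(Mk))$ term rather than something larger. This is exactly the place where the choice $t = r^i$ with $r > Mk$ prime is used, and getting the divisibility statement exactly right — relating the order of multiplication-by-$p$ on $B(q,\theta)$ to the factorization of $q^2 - 1$ — is the technical heart of the estimate. Everything after that is bookkeeping with floor functions and a crude bound on the constant-sized set of fixed points.
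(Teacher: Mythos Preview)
Your overall strategy matches the paper's: partition $B(q,\theta)$ into $\pi$-cycles, bound the number of elements lost to short cycles by an absolute constant, and control the rounding loss on long cycles using that their length exceeds $Mk$. The bookkeeping you set up (keeping $\lfloor m/k\rfloor \geq m/k - 1$ from each long cycle and bounding the number of long cycles by $q/(Mk)$) is in fact slightly cleaner than the paper's proportional estimate and leads to the same inequality.

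However, the arithmetic claim you correctly flag as the crux is false as stated, and this is a genuine gap. You assert that every cycle length of $\pi$ divides $t = r^i$ and is therefore a power of $r$. But the order of $p$ modulo $q^2-1$ does \emph{not} divide $t$: since $q = p^t$ we have $p^t \equiv q \not\equiv 1 \pmod{q^2-1}$, while $p^{2t} = q^2 \equiv 1 \pmod{q^2-1}$. Hence every cycle length $e$ divides $2t = 2r^i$ (the paper uses the looser $4t$), so $e \in \{r^s,\,2r^s : 0 \le s \le i\}$. In particular, cycles of length $2$ can occur: for $k > 2$ these are ``short'' cycles that are not fixed points, and for $k = 2$ a length-$2$ cycle is ``long'' but has length $< Mk$, wrecking your bound on the number of long cycles.

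The fix, which is what the paper does, is to reset the dichotomy to ``length $< Mk$'' versus ``length $\ge Mk$'' rather than ``length $<k$'' versus ``length $\ge k$''. Any cycle length $e < Mk < r$ that divides $4r^i$ is coprime to $r$ and hence divides $4$; therefore every element $b$ in a short cycle satisfies $p^4 b \equiv b \pmod{q^2-1}$, and the number of such $b$ in all of $\mathbb{Z}_{q^2-1}$ is $\gcd(p^4-1,\,q^2-1) \le p^4-1$. This is exactly where the constant $p^4-1$ in the statement comes from, and it replaces your fixed-point count $F$. With that correction in place, your inequality $|A| \ge L/k - q/(Mk)$ with $L \ge q - (p^4-1)$ delivers the lemma.
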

\begin{proof}
In order to obtain a lower bound on $|A|$, we need to estimate the number of cycles of $\pi$ that are short.  For instance, if
all cycles of $\pi$ have length less than $k$, then $|A| = 0$.  For a cycle $\sigma$ of $\pi$ with length $mk  \geq Mk$, we delete at most
$m(k-1)$ elements from $B(q, \theta )$ and keep at least $m-1$ elements.

We estimate the number of cycles of length at most $Mk-1$.  Let $\sigma = ( b , pb , \dots , p^{e-1} b)$ be a cycle of $\pi$ of length
$e$ where $e \leq Mk - 1$.  The integer $e$ is the smallest positive integer such that $p^e b \equiv b ( \textup{mod}~q^2 - 1 )$.  This is the same
as saying that the order of $p$ in the multiplicative group of units $\integers_{n}^{*}$ is $e$ where
$n = \frac{ q^2 - 1}{ \textup{gcd}( b , q^2 - 1) }$.  Since
\[
p^{4t} - 1 = (p^{2t} - 1)( p^{2t} + 1) = (q^2 - 1)(p^{2t} +1 )
\]
we have $p^{4t} \equiv 1 ( \textup{mod}~q^2 - 1)$, so $e$ must divide $4t = 4r^i$.  Since $r$ is prime and $r \geq Mk$, $e$ cannot divide $r$, so $e$ must divide 4.  To count the number the number of cycles of $\pi$ with length at most $Mk-1$, it is enough to count the elements $x \in \integers_{q^2 - 1} \backslash \{0 \}$ such that $p^4 x \equiv x (\textup{mod}~q^2 - 1)$.  This follows from the fact that if $e \in \{1,2 \}$ and $p^e x \equiv x ( \textup{mod}~q^2 - 1)$, then
$p^4 x \equiv x ( \textup{mod}~q^2 - 1)$.  The number of solutions to this congruence is
$\textup{gcd}(p^4 - 1 , q^2 - 1) \leq p^4 - 1$.  Therefore, there are at most $p^4 - 1$ cycles of $\pi$ of length at most $Mk-1$.
For a cycle of length at least $Mk$, the proportion of elements of the cycle that are put into $A$ is at least
$\frac{ M - 1}{Mk}$ (the function $f(x) = \frac{x-1}{xk}$ is increasing provided $k > 0$).
Since $| B(q , \theta ) | = q$,
\[
|A|  \geq  \left( q - ( p^4 - 1) Mk \right) \left( \frac{M-1}{Mk} \right) = \frac{q}{k} \left( 1 - \frac{1}{M} \right) - (p^4 - 1)(M - 1).
\]
\end{proof}

Theorem~\ref{weak construction} follows from Lemmas~\ref{claim 1} and \ref{claim 2}.


\section{Concluding Remarks}

The most important open problem concerning $k$-fold Sidon sets is an answer to Conjecture~\ref{LV conjecture}.
The case $k=3$ is particularly interesting.  A 3-fold Sidon set $A \subset [N]$ with $|A| \geq cN^{1/2}$ is known to imply the existence of a graph with $c_1 N$ vertices, $c_2 N^{3/2}$ edges, and every edge is in exactly one cycle of length four \cite{TV}.

 Another problem is to determine the maximum size of a 2-fold Sidon set in $\mathbb{Z}_N$ or $[N]$.  Let
$S_k(N)$ be the maximum size of a $k$-fold Sidon set in $\mathbb{Z}_N$.
For any integer $t \geq 1$, there are 2-fold Sidon sets $A \subset \mathbb{Z}_N$, $N = 2^{2^{t+1}} + 2^{2^t} + 1$, with $|A| \geq \frac{1}{2}N^{1/2} - 3$ (see \cite{LV}).  Theorem~\ref{k ub} gives an upper bound of $(N/2)^{1/2} + O(N^{1/4})$ so
\[
\frac{1}{2} \leq \limsup_{N \rightarrow \infty} \frac{ S_2 (N) }{N^{1/2} } \leq \frac{1}{2^{1/2} }.
\]
It would be interesting to determine the above limit.  In the case of Sidon sets, we have
$\limsup_{N \rightarrow \infty} \frac{ S_1 (N) }{N^{1/2}} = 1$ by \cite{ET} and \cite{S}.


\end{document}